\newcommand{\nc}{\newcommand}
\nc{\bC}{\bold{C}} \nc{\bN}{\Bbb{N}} \nc{\cF}{\mathcal{F}}
\nc{\cE}{\mathcal{E}} \nc{\cR}{\mathcal{R}} \nc{\cM}{\mathcal{M}}
\nc{\al}{\alpha} \nc{\bt}{\beta} \nc{\gm}{\gamma} \nc{\dl}{\delta}
\nc{\om}{\omega} \nc{\sg}{\sigma} \nc{\Sg}{\Sigma} \nc{\vf}{\varphi}
\nc{\ve}{\varepsilon} \nc{\os}{\overset} \nc{\ol}{\overline}
\nc{\ul}{\underline} \nc{\us}{\underset} \nc{\sbs}{\subset}
\nc{\bsl}{\backslash} \nc{\Ra}{\Rightarrow}
\nc{\lra}{\longrightarrow} \nc{\all}{\allowdisplaybreaks}
\nc{\Codes}{\operatorname{{\bold{Codes}}}}
\nc{\RegMono}{\operatorname{\mathcal{R}{\rm{eg}\mathcal{M}{\rm{ono}\!}}}}
\nc{\RegEpi}{\operatorname{\mathcal{R}{\rm{eg}\mathcal{E}{\rm{pi}\!}}}}
\nc{\Mn}{\operatorname{\mathcal{M}{\rm{ono}\!}}}
\nc{\Ep}{\operatorname{\mathcal{E}{\rm{pi}\!}}}
\nc{\Rg}{\operatorname{\mathcal{R}{\rm{eg}\!}}}
\nc{\Ob}{\operatorname{Ob\!}}
\numberwithin{equation}{section}
\newtheorem{theo}{\ \ \ Theorem}[section]
\newtheorem{lem}[theo]{\ \ \ Lemma}
\newtheorem{prop}[theo]{\ \ \ Proposition}
\theoremstyle{definition}
\theoremstyle{remark}
\begin{document}

\title[]
{Effective codescent morphisms of $n$-quasigroups and $n$-loops}

\author{Dali Zangurashvili}

\maketitle

% Abstract.
\begin{abstract}
Effective codescent morphisms of $n$-quasigroups and of $n$-loops are characterized. To this end, it is proved that, for any $n\geq 1$, every codescent morphism of $n$-quasigroups (resp. $n$-loops) is effective. This statement generalizes our earlier results on qusigroups and loops. Moreover, it is shown that the elements of the amalgamated free products of $n$-quasigroups (resp. $n$-loops) have unique normal forms, and that the varieties of $n$-quasigroups and $n$-loops satisfy the strong amalgamation property. The latter two statements generalize the corresponding old results on quasigroups and loops by Evans.
\bigskip

\noindent{\bf Key words and phrases}: effective codescent morphism; $n$-quasigroup; $n$-loop; normal form of an element of the amalgamated free product; strong amalgamation property.

\noindent{\bf 2020  Mathematics Subject Classification}: 18E50, 18C20, 20N15, 20N05, 08B25, 68Q42.
\end{abstract}
\vskip+2mm

% 1.
\section{Introduction}
The notion of an effective (co)descent morphism in a category is one of the main notions of the Grothendieck descent theory. The problem of characterizing effective codescent morphisms in varieties of universal algebras was posed by Janelidze. This problem was studied in a number of works \cite{M}, \cite{JT2}, \cite{Z1}, \cite{Z2}, \cite{Z3}, \cite{SZ}, \cite{Z4}, \cite{SZ1}. In \cite{Z3}, we proved that every codescent morphism of quasigroups (resp. loops) is effective. In the present paper, we generalize this result to the case of $n$-quasigroups and $n$-loops, for arbitrary $n\geq 1$.  This  generalization together with the criterion for a monomorphism of a regular category with pushouts and the strong amalgamation property to be a codescent morphism, found in \cite{Z1}, gives the characterization of effective codescent morphisms of $n$-quasigroups. 

We also show that, for any $n\geq 1$, the elements of the amalgamated free products of $n$-quasigroups (resp. $n$-loops) have unique normal forms and that the varieties of $n$-quasigroups and $n$-loops satisfy the strong amalgamation property. These statements generalize the corresponding old results on quasigroups and loops by Evans.
 
The author gratefully acknowledges the financial support from Shota Rustaveli National Science Foundation of Georgia (FR-22-4923).

 \section{The convergent representations of the varieties of n-quasigroups and $n$-loops}
 
We follow the notation used in the treatise \cite{B} by Belousov. Namely, for natural numbers $i$ and $j$, the symbol $x_i^j$ denotes the sequence $x_i,x_{i+1},...,x_{j}$ if $i<j$; it denotes the symbol $x_i$ if $i=j$, and denotes the empty sequence if $i>j$. The symbol $\stackrel{m}{e}$ denotes the sequence $e,e,...,e$ ($m$ times) if $m\geq 1$, and denotes the empty sequence if $m=0$.

Let $n$ be a natural number. Recall \cite{B} that an $n$-\textit{quasigroup} is defined as a set $Q$ equipped with an $n$-ary operation $f$ such that, for arbitrary $a_1,a_2,...,a_n\in Q$ and $i$ $(1\leq i\leq n)$, there is a unique element $b\in Q$ with 
\begin{equation}
f(a_1^{i-1},b,a_{i+1}^n)=a_i.
\end{equation}

An $n$-quasigroup $Q$ is called an $n$-\textit{loop} \cite{B} if there is an element $e\in Q$ such that, for any $i$ $(1\leq i\leq n)$, the identity
\begin{equation}
f(\stackrel{i-1}{e},x,\stackrel{n-i}{e})=x
\end{equation}
\noindent is satisfied in $Q$.

One can easily verify that the category of $n$-quasigroups (with morphisms being mappings preserving $f$) is the variety of universal algebras where the signature consists of the $n$-ary operation symbols $f, g_i$, while the identities are
\begin{equation}
f(x_1^{i-1},g_i(x_1^n),x_{i+1}^n)=x_i.
\end{equation}
\noindent and
\begin{equation}
g_i(x_1^{i-1},f(x_1^n),x_{i+1}^n)=x_i
\end{equation}
\noindent $(1\leq i\leq n)$. Similarly, the category of $n$-loops is the variety of universal algebras where the signature, in addition to the $n$-ary operation symbols $f, g_i$, contains one nullary operation $e$, while the identities are (2.2)-(2.4) $(1\leq i\leq n)$.
% and the following ones:
%\begin{equation}
%f(e,e,...,e,x,e,...,e)=x,
%\end{equation}
%\noindent where $x$ is placed on the $i$th palce, for all $i$ ($1\leq i\leq n$).
 
Obviously, $2$-quasigroups are precisely quasigroups, and $2$-loops are precisely loops. Observe also that $1$-quasigroups are precisely sets equipped with permutations, while $1$-loops are just sets.\vskip+1mm

Let $\mathcal{C}$ be a category with pushouts, and $p: B\rightarrow E$ be its morphism. The morphism $p$ induces the change-of-cobase functor $$p_{*}: B/\mathcal{C}\rightarrow E/\mathcal{C}$$ between the coslice categories; it sends a morphism $\varphi: B\rightarrow C$ to the pushout of $\varphi$ along $p$. As is well-known, this functor has a right adjoint. Recall that $p$ is called a \textit{codescent morphism} (resp. an \textit{effective codescent morphism}) if the functor $p_{*}$ is precomonadic (resp. comonadic) \cite{JT}.

In \cite{Z1}, we gave the necessary and sufficient condition for a morphism of a variety of universal algebras with the strong amalgamation property to be a codescent morphism. In \cite{Z3}, we gave the sufficient condition for any codescent morphism of a variety of universal algebras with the amalgamation property to be effective. Applying this sufficient condition, in the joint paper with Samsonadze \cite{SZ}, we related the problem of characterizing effective codescent morphisms in varieties of universal algebras to the notion of confluency that is one of the central notions in term rewriting theory. Below we recall some definitions from this theory (for more details, we refer the reader to the treatise \cite{BN} by Baader and Nipkow).

Let $\mathfrak{F}$ be a signature, i.e., a set of operation symbols equipped with arities. Let $X$ be a countable set with $\mathfrak{F} \cap X=\varnothing$. We assume that the reader is familiar with the notions of a $\mathfrak{F}$-term (or simply a term) over $X$, the size of a term, a position in a term, and the subterm of a term $t$ at a position $p$. We denote the latter subterm by $t\vert_p$, and denote the term obtained from $t$ by replacing the subterm $t\vert_p$ by a term $t'$ by the symbol $t[t']_p$. 

As usual, we identify positions in a term with nodes in the corresponding tree, and use the well-known rule of enumerating them with strings of natural numbers. For instance, the tree and the enumeration of nodes for the term $t=f(x_1,x_2,g_3(x_1,x_2,x_3,x_4),x_4)$ is the following:
%\begin{equation}
$$\xymatrix{&&&f\ar[dlll]\ar[dl]\ar[dr]\ar[drrr]!!<-2ex>&&&\\
1\,\,\,\,x_1&&2\,\,\,\,x_2&&g_3\ar[dll]\ar[dl]\ar[dr]!<1ex>\ar[drr]!<1ex>&&4\,\,\,\,x_4\\
&&31\,\,\,\,x_1&32\,\,\,\,x_2&&33\,\,\,\,x_3&34\,\,\,\,x_4}$$
%\end{equation}

%\begin{equation}
%\xymatrix{&&f\,\,\,\ar[dll]!<1ex>\ar[dl]\ar[d]!<1ex,1ex>\ar[dr]!<1ex,1ex>\ar[rrd]!<3ex,1ex>&&\\
%11\,\,\,\,x_1&12\,\,\,\,x_2&\,\,\,\,g_3\ar[dll]\ar[dl]\ar[d]!<1ex,1ex>\ar[rd]!<1ex,1ex>\ar[rrd]!<2ex,%1ex>&14\,\,\,\,\,x_4&15\,\,\,\,\,x_5\\
%21\,\,\,x_1&22\,\,\,\,x_2&23\,\,\,\,\,x_3&24\,\,\,\,\,x_4&25\,\,\,\,\,x_5}
%\end{equation}
\noindent If, for instance, $p=(3)$, then $t\vert_p=g_3(x_1,x_2,x_3,x_4)$. The node $f$ corresponds to the root position.

Let $T(\mathfrak{F},X)$ be the set of all terms over $X$, and $\sigma$ be a \textit{substitution} (i.e., a mapping $\sigma:X\rightarrow T(\mathfrak{F},X)$ such that the set $\lbrace x\mid \sigma(x)\neq x\rbrace$ is finite). Since $T(\mathfrak{F},X)$ has the structure of an $\mathfrak{F}$-algebra and is free over the set $X$, there is a unique $\mathfrak{F}$-homomorphism $\widehat{\sigma}:T(\mathfrak{F},X)\rightarrow T(\mathfrak{F},X)$ such that its restriction on $X$ is $\sigma$.

An \textit{oriented identity} is a formal expression of the form $l=r$, where $l$ and $r$ are terms (the order of the terms matters). In this paper, when no confusion might arise, \textbf{`identity' means an oriented identity.}

We denote  the set of variables that occur in at least one of terms $t_1,t_2,...,t_n$ by $Var(t_1,t_2,...,t_n)$. 

Let $\Sigma$ be a set of identities. A pair $(\mathfrak{F}, \Sigma)$ is called a \textit{term rewriting system} if $l$ is not a variable and $Var(r)\subseteq Var(l)$, for any identity $l=r$ from $\Sigma$. 

Let $(\mathfrak{F},\Sigma)$ be a term rewriting system. One introduces the following binary relation $\rightarrow$ on the set $T(\mathfrak{F},X)$ of terms: $t\rightarrow t'$ if the condition (C) below is satisfied. \vskip+2mm

(C) there exists an identity $l=r$ from $\Sigma$, a substitution $\sigma$ and a position $p$ of $t$ such that $t\vert_p=\widehat{\sigma}(l)$ and $t'=t[\widehat{\sigma}(r)]_p$.
\vskip+2mm
Roughly speaking, $t\rightarrow t'$ means that the term $t'$ can be obtained from $t$ by replacing some subterm, and this replacement is compliant with some identity from $\Sigma$. At that, we are permitted to use identities only in one direction -- from left to right.

We use the symbol $\xrightarrow{*}$ for the reflexive transitive closure of $\rightarrow$.
 
A term rewriting system $(\mathfrak{F}, \Sigma)$ (or simply $\Sigma$) is called \textit{terminating} if there is no infinite sequence of terms $$t_1\rightarrow t_2\rightarrow ...$$ A term rewriting system $(\mathfrak{F}, \Sigma)$ (or simply $\Sigma$) is called \textit{confluent} if, for any terms $t, t_1, t_2$ with $t_{1}\xleftarrow{*} t \xrightarrow{*} t_2$, there is a term $t'$ such that $t_1\xrightarrow{*} t' \xleftarrow{*} t_2$ (in that case, the terms $t_1$ and $t_2$ are called \textit{joinable}):
$$\xymatrix{&t\ar[dl]_{*}\ar[rd]^{*}&\\
t_1\ar@{-->}[dr]_{*}&&t_2\ar@{-->}[dl]^{*}\\
&t'&}$$
 
 Further, recall that a \textit{unifier} of terms $t$ and $t'$ is defined as a substitution $\sigma$ such that $\widehat{\sigma}(t)=\widehat{\sigma}(t')$. A unifier $\sigma$ is called \textit{most general} if any unifier of these terms can be obtained by composing $\widehat{\sigma}$ with some substitution. If two terms have at least one unifier, then they have also the most general unifier; it is unique up to \textit{renaming} (i.e. an injective substitution $\varrho$ with $\varrho(X)\subseteq X$) \cite{BN}.
 
 Let 
 $\iota_1:l_1=r_1$ and $\iota_2:l_2=r_2$ 
 be identities from $\Sigma$. Let us rename their variables so that
% \begin{equation}
$$ Var(l_1,r_1)\cap Var(l_2,r_2)=\varnothing.$$
% \end{equation}
Let $p$ be a position of $l_1$ such that $l_1\vert_p$ is not a variable and the terms $l_1\vert_p$ and $l_2$ unify (in that case, one says that the term $l_1$ \textit{overlaps} the term $l_2$ at the position $p$). Let $\sigma$ be the most general unifier of $l_1\vert_p$ and $l_2$. The pair of terms $$(\widehat{\sigma}(r_1), \widehat{\sigma}(l_1)[\widehat{\sigma}(r_2)]_p)$$ is called the \textit{critical pair} determined by the identities $\iota_1$ and $\iota_2$ (at the position $p$ of $l_1$). 

Observe that the term  $\widehat{\sigma}(l_1)$ is in the relation $\rightarrow$ with both terms $\widehat{\sigma}(r_1)$ and  $\widehat{\sigma}(l_1)[\widehat{\sigma}(r_2)]_p$ participating in the critical pair:
\begin{equation}
\xymatrix{&\widehat{\sigma}(l_1)\ar[dl]\ar[dr]&\\
\widehat{\sigma}(r_1)&&\widehat{\sigma}(l_1)[\widehat{\sigma}(r_2)]_p}
\end{equation}

\noindent Moreover, the latter two terms are obviously joinable if $p$ is a root position of $\iota_1$ and $\iota_2$ is a renamed copy of $\iota_1$.
  
 \begin{theo}\cite{BN}
 Let $(\mathfrak{F},\Sigma)$ be terminating. Then it is confluent if and only if all critical pairs arisen from the identities of $\Sigma$ (including the critical pairs determined by identities and their renamed copies) are joinable.
 \end{theo}
 
\begin{lem}
The system of identities (2.3)-(2.4) is confluent if and only if $n=1$. 
\end{lem}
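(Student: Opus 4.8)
The plan is to apply Theorem 2.1, which reduces confluence of a terminating system to joinability of its critical pairs. So the first step is to observe that (2.3)--(2.4) is indeed a term rewriting system --- every left-hand side is compound and every right-hand side is a variable occurring in it --- and that it is terminating: each rewrite step replaces a subterm $\widehat{\sigma}(l)$ by a proper subterm of it, namely $\widehat{\sigma}(r)$, so the size of the term strictly decreases and no infinite rewrite sequence exists. By Theorem 2.1 it therefore suffices to check all critical pairs, including those coming from renamed copies.

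Write $A_i$ and $B_i$ for the identities (2.3) and (2.4). In the left-hand side of $A_i$ the only non-variable proper subterm is the one at position $(i)$, headed by $g_i$; in the left-hand side of $B_i$ the only non-variable proper subterm is the one at position $(i)$, headed by $f$; the roots are headed by $f$ and $g_i$ respectively. Matching head symbols, the overlaps that can arise are exactly: (a) $A_i$ onto $B_i$ at position $(i)$; (b) $B_i$ onto $A_j$ at position $(i)$; (c) the root overlap of $A_i$ (resp. $B_i$) with a renamed copy of itself; and (d) the root overlap of $A_i$ onto $A_j$ with $i\ne j$. In case (d) a most general unifier would have to send a fresh variable of $A_j$ to a term containing a fresh variable of $A_i$ and, simultaneously, a fresh variable of $A_i$ to a term containing that first variable, so the occur check fails, there is no unifier, and hence no critical pair. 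In case (c) joinability is immediate, as noted just after (2.5). In case (a), and in case (b) when $j=i$, computing the most general unifier shows that the two components of the resulting critical pair coincide, so it is joinable.

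The decisive case is (b) with $j\ne i$, which can occur precisely when $n\ge 2$. Computing the most general unifier of the subterm $f(x_1^n)$ of the left-hand side of $B_i$ with the left-hand side of $A_j$, one obtains a critical pair whose first component is the variable $x_i$ and whose second component is a term headed by $g_i$ whose $i$-th argument is a variable (and whose only compound argument is a $g_j$-headed term whose $j$-th argument is again a variable). Both components are already in normal form: no rule $B_m$ applies, since firing $B_m$ requires an $f$-headed term in the $m$-th slot of a $g_m$, which never occurs here, and no rule $A_m$ applies, since there is no $f$-headed subterm at all; and the two components are distinct, one being a variable and the other not. Hence this critical pair is not joinable, so by Theorem 2.1 the system is not confluent for any $n\ge 2$. (Alternatively, one may exhibit the corresponding peak directly, by displaying a term having both $x_i$ and that $g_i$-headed term as one-step reducts.) For $n=1$ only the joinable cases (a), (c) and (b) with $j=i$ remain, so the system is confluent. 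Combining the two directions yields the stated equivalence. The substantive part of the argument is the classification of overlaps and the recognition that overlap (b) with $j\ne i$ produces a non-joinable critical pair; the unifier computations it requires, and the occur-check argument in (d), are routine.
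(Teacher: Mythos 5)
Your proof is correct and follows essentially the same route as the paper: it applies Theorem 2.1 and exhibits the non-joinable critical pair obtained by overlapping the left-hand side of (2.4) with that of (2.3) for distinct indices $i\neq j$ at position $(i)$, which is exactly the paper's argument. Your additional bookkeeping (termination check and the full classification of overlaps, settling the $n=1$ case) only fills in details the paper treats as immediate.
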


\begin{proof}
First observe that for any $i$ and $j$ ($1\leq i,j\leq n$), the left-hand term $l_1$ of identity (2.4) overlaps the left-hand term $l_2$ of the identity 
\begin{equation}
f(y_{1}^{j-1},g_j(y_1^{n}),y_{j+1}^{n})=y_j
\end{equation}
\noindent at the position $p=(i)$. 
The most general unifier $\sigma$ of $l_1\vert_p$ and $l_2$ is given as follows: $x_j\mapsto g_j(y_1,y_2,...,y_n)$ and $x_k\mapsto y_k$, for $k\neq j$. 
Assume now that $1\leq i<j\leq n$. 
Then diagram (2.5) takes the form

$$\xymatrix{&g_i(y_1^{i-1},f(y_1^{j-1},g_j(y_1^n),y_{j+1}^n),y_{i+1}^{j-1},g_j(y_1^n),y_{j+1}^n)\ar[dl]\ar[d]!<3ex>&\\
\,\,\,\,\,\,\,\,y_i&\,\,\,\,\,\,\,\,\,\,\,\,\,\,\,\,\,\,\,\,\,\,\,\,\,\,\,\,\,\,\,\,\,\,\,\,\,\,\,\,\,\,\,\,\,\,\,\,\,\,\,\,\,\,\,\,\,\,\,\,\,\,\,\,\,\,\,\,\,\,\,\,\,\,\,\,\,\,\,\,\,\,\,\,g_i(y_1^{i-1},y_{j},y_{i+1}^{j-1},g_j(y_1^n),y_{j+1}^n)}$$
\noindent% while the terms $\widehat{\sigma}(r_1)$ and $\widehat{\sigma}(l_1)[\sigma(r_2)]_p)$ are resp. $y_i$ and 
% $$g_i(y_1,...,y_{i-1},y_{j},y_{i+1},...,y_{j-1},g_j(y_1,...,y_n),y_{j+1},...,y_n).$$
\vskip+2mm
 \noindent The terms in the obtained critical pair obviously are not joinable with respect to the system (2.3)-(2.4) of identities. Theorem 2.1 implies that this system is not confluent. The "if" part of the claim is immediate. 
 \end{proof}
 
 The arguments given in the proof of Lemma 2.2. imply that, in the variety of $n$-quasigroups with $n\geq 2$, the identity
 \begin{equation}\label{2.7}
 g_i(x_1^{i-1},x_{j},x_{i+1}^{j-1},g_j(x_1^n),x_{j+1}^n)=x_i
\end{equation}
is satisfied if $i<j$, and the identity
\begin{equation}\label{2.8}
g_i(x_1^{j-1},g_j(x_1^n),x_{j+1}^{i-1},x_{j},x_{i+1}^n)=x_i
 \end{equation}
is satisfied if $i>j$. 

\begin{lem}
Let $n\geq 2$. The system of identities (2.3)-(2.4) with $1\leq i,j\leq n$, (2.7) with $1\leq i<j\leq n$ and (2.8) with $1\leq j<i\leq n$ is a confluent representation of the variety of $n$-quasigroups. 
\end{lem}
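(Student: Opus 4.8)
The plan is to verify, in this order, that the system $\Sigma'$ consisting of (2.3)--(2.4), (2.7) and (2.8) is a term rewriting system, that it is terminating, that it represents the same variety as (2.3)--(2.4), and finally --- by Theorem 2.1 --- that it is confluent. The first three points are routine. Every left-hand side of $\Sigma'$ has root $f$ or $g_i$, hence is not a variable, and every right-hand side is a single variable occurring in it, so $\Sigma'$ is a term rewriting system. It is terminating because each rewrite step $t\to t[\widehat{\sigma}(r)]_p$ replaces the subterm $\widehat{\sigma}(l)$ of $t$ by $\widehat{\sigma}(r)=\sigma(x)$ for some variable $x\in Var(l)$, and since $l$ is not a variable, $\sigma(x)$ is a proper subterm of $\widehat{\sigma}(l)$, so the number of symbols of the term strictly decreases. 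And since $\Sigma'$ contains the defining identities (2.3)--(2.4) while, conversely, (2.7) and (2.8) follow from them in every $n$-quasigroup by the discussion preceding the lemma, the congruence on terms generated by $\Sigma'$ is exactly the equational theory of $n$-quasigroups.

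The substance is the joinability of all critical pairs. The key structural point is that each of the four left-hand sides has exactly one non-variable proper subterm --- at argument position $i$, $i$, $j$, $j$, carrying the symbol $g_i$, $f$, $g_j$, $g_j$ respectively --- so every overlap occurs either at the root or at this distinguished position. Running through the pairs of rule schemes, one sees that, apart from the overlaps of a rule with a renamed copy of itself (which occur only at the root and are joinable by the remark after (2.5)), the only overlaps admitting a most general unifier are: (2.3) overlapping (2.4) at a common index; (2.4) overlapping (2.3); (2.3) overlapping (2.7)/(2.8); (2.7)/(2.8) overlapping (2.4); and (2.7)/(2.8) overlapping (2.7)/(2.8). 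Every other candidate is excluded: either the two operation symbols to be identified are distinct ($g_i$ versus $g_{i'}$ with $i\neq i'$, or $f$ versus a $g_i$), or they agree but the unification would force a variable to be replaced by a term properly containing it --- and this occurs-check obstruction is precisely what arises when one tries to superpose the outer $g$ of one $g$-headed rule onto the outer $g$ of another, since the nested $g$ or $f$ of each creates a cycle.

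For each genuine overlap one computes the critical pair via diagram (2.5). In each case it is either a pair whose two components already coincide --- this happens for (2.3) overlapping (2.4) at a common index and in the coinciding-index degenerations of the other families --- or a pair $(y,s)$ in which $y$ is a variable and $s$ is an instance of a single left-hand side of $\Sigma'$ that rewrites in one step to $y$; concretely, a critical pair from (2.4) overlapping (2.3) is closed using (2.7) or (2.8), one from (2.3) overlapping (2.7)/(2.8) using (2.3), one from (2.7)/(2.8) overlapping (2.4) using (2.4), and one from (2.7)/(2.8) overlapping (2.7)/(2.8) using (2.7) or (2.8). Hence every critical pair is joinable, so Theorem 2.1 yields confluence of $\Sigma'$, and the lemma follows.

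I expect the only real work to be the bookkeeping of the last two paragraphs: because (2.3)--(2.4) and (2.7)--(2.8) are \emph{families} of rules indexed by one or two indices, the overlap analysis and the computation of the critical pairs must be carried out scheme by scheme and position by position, with care about the occurs-check conditions and about checking that the rule used to close a given critical pair actually lies in $\Sigma'$ --- e.g. that its two indices are genuinely distinct, the coinciding-index situations being exactly those in which the critical pair is already trivial.
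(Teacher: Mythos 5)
Your proof is correct and follows essentially the same route as the paper's: invoke the critical-pair criterion (Theorem 2.1), rule out most overlaps by symbol clash or an occurs-check/size argument (the paper does this explicitly only for the root overlap of (2.4) with (2.7)), and close each remaining critical pair with a single rule of the system. You are in fact more complete than the paper, which works out only one representative overlap and appeals to ``similarly'' for the rest, and which leaves termination and the equivalence of the two presentations implicit.
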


\begin{proof}
To account all critical pairs, note that the left-hand terms $t_1$ and $t_2$ of identities (2.4) and (2.7) do not unify, for any $i<j$. Indeed, if $\sigma$ were their unifier, then the size $k_1$ of the subterm $t\vert_{(i)}$ of the term $t=\widehat{\sigma}(t_1)$ would be greater than the size $k_2$ of the subterm $t\vert_{(j)}$. On the other hand, $k_1<k_2$ since $t=\widehat{\sigma}(t_2)$, and we arrive to the contradiction. 

Consider now, for example, the case where  $\iota_1$ is identity $(2.3)$, $p=(i)$, and  $\iota_2$ is the renamed copy of $(2.7)$ (where $x_i$'s are replaced by $y_i$'s; $i<j$). The most general unifier of $l_1\vert_p$ and $l_2$ sends $x_i$ to $y_j$, $x_j$ to $g_j(y_1,...,y_n)$, and sends all other $x_k$'s to $y_k$'s. It is easy to see that diagram (2.5) takes the form 
$$\xymatrix{&f(y_1^{i-1},g_i(y_1^{i-1},y_j,y_{i+1}^{j-1},g_j(y_1^{n}),y_{j+1}^{n}), y_{i+1}^{j-1}, g_j(y_1^{n}), y_{j+1}^{n})\ar[dl]!<-1ex>\ar[d]!<3ex>&\\
\,\,\,\,\,\,\,\,\,\,\,\,\,y_j&\,\,\,\,\,\,\,\,\,\,\,\,\,\,\,\,\,\,\,\,\,\,f(y_1^{j-1},g_j(y_1^n),y_{j+1}^n)}$$
\noindent We obtain the critical pair 
%$$(y_j, f(y_1,...,y_{j-1},g_j(y_1,...,y_n),y_{j+1},...,y_n)),$$ 
that obviously is joinable due to identity (2.3). Similarly, one can verify that all other critical pairs are joinable.

%This implies that the only non-trivial cases where $l_1$ overlaps $l_2$ are the following ones (where $i$ and $j$ are arbitrary naturals with $1\leq i,j\leq n$, unless specified otherwise):

%(i) $\iota_1=(2.2)$, $p=(i)$, $\iota_2=(2.3)$; (ii) $\iota_1=(2.3)$, $p=(i)$, $\iota_2=(2.2)$; (iii)  $\iota_1=$(2.5) ($i<j$), $p=(j)$, and $\iota_2$ is obtained from (2.6) by replacing $i$ by $j$ and $j$ by $i$; (iv)  $\iota_1=$(2.6) ($i>j$), $p=(j)$, and $\iota_2$ is obtained from (2.5) by replacing $i$ by $j$ and $j$ by $i$; (v) $\iota_1=(2.3)$, $p=(i)$, and $\iota_2=(2.2)$ where $i$ is replaced by $j$; (vi) $\iota_1= (2.2)$, $p=(i)$, and either $\iota_2=$(2.5) ($i<j$) or $\iota_2=$(2.6) ($i>j$); (vii) $\iota_1=$(2.5) ($i<j$) or $\iota_1=$(2.6) ($i>j$); $p=(j)$, and $\iota_2$ is obtained from (2.3) by replacing $i$ by $j$. 

%It is easy to observe that, in the cases (i)-(iv), the components of the critical pairs coincide. The case (v) was already considered above (it is the one where from identities (2.5) and (2.6) appeared). In the case (vi), we obtain the critical pair  $$(y_j, f(y_1,...,y_{j-1},g_j(y_1,...,y_n),y_{j+1},...,y_n)),$$ that obviously is joinable due to identity (2.2). In the case (vii), we obtain the critical pair $$(y_i, g_i(y_1,...,y_{i-1},f(y_1,...,y_n),y_{i+1},...,y_n))$$
%\noindent that is joinable due to identity (2.3). Now it suffices to apply Theorem 2.1.
\end{proof}

We are going now to deal with the variety of $n$-loops ($n\geq 1$). 
%The system of identities (2.3)-(2.4) with $1\leq i,j\leq n$, (2.7) with $1\leq i<j\leq n$, (2.8) with $1\leq j<i\leq n$, and (2.5) with $(1\leq i\leq n$) is not confluent, for any $n\geq 1$.

\begin{lem}
(i) Let $n$ be an arbitrary natural number. The system of identities (2.2)-(2.4) $(1\leq i\leq n)$ is not confluent. 

(ii) Let $n>1$. The union of the system of identities given in Lemma 2.3 with the system of identities (2.2) ($1\leq i\leq n$) is not confluent. 
\end{lem}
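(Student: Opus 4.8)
The plan is to prove both parts in one stroke by producing, for each of the two rewriting systems, a single critical pair that is not joinable; since each rewrite rule of either system strictly lowers the size of a term (every rule has a single variable on its right which occurs in its left, so $|\widehat{\sigma}(l)|>|\widehat{\sigma}(r)|$ for any substitution $\sigma$), both systems are terminating, and one may either quote Theorem 2.1 or simply note that a peak built on a non-joinable critical pair already defeats confluence. The decisive observation is that, for every $i$ with $1\leq i\leq n$, the left-hand term of identity (2.3) overlaps the left-hand term of identity (2.2) at the root.

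Concretely, I would let $\iota_1$ be (2.3) with index $i$, so $l_1=f(x_1^{i-1},g_i(x_1^n),x_{i+1}^n)$ and $r_1=x_i$, and let $\iota_2$ be a renamed copy of (2.2) with the same index $i$, say $l_2=f(\stackrel{i-1}{e},y,\stackrel{n-i}{e})$ and $r_2=y$; the position $p$ is the root of $l_1$, so $l_1\vert_p=l_1$. The most general unifier $\sigma$ of $l_1$ and $l_2$ sends $x_k\mapsto e$ for every $k\neq i$ and $y\mapsto g_i(\stackrel{i-1}{e},x_i,\stackrel{n-i}{e})$, whence $\widehat{\sigma}(l_1)=\widehat{\sigma}(l_2)=f(\stackrel{i-1}{e},g_i(\stackrel{i-1}{e},x_i,\stackrel{n-i}{e}),\stackrel{n-i}{e})$, and diagram (2.5) becomes the peak
$$x_i \;\longleftarrow\; f(\stackrel{i-1}{e},g_i(\stackrel{i-1}{e},x_i,\stackrel{n-i}{e}),\stackrel{n-i}{e}) \;\longrightarrow\; g_i(\stackrel{i-1}{e},x_i,\stackrel{n-i}{e}).$$
The resulting critical pair is $\bigl(x_i,\,g_i(\stackrel{i-1}{e},x_i,\stackrel{n-i}{e})\bigr)$, and it lives in \emph{both} systems of the statement, since (2.2) and (2.3) belong to each.

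Next I would check that this pair is not joinable, in either system. As $x_i$ is a variable, hence irreducible, joinability would force $g_i(\stackrel{i-1}{e},x_i,\stackrel{n-i}{e})\xrightarrow{*}x_i$; so it is enough to show that $g_i(\stackrel{i-1}{e},x_i,\stackrel{n-i}{e})$ is a normal form. Its proper subterms are the constant $e$ and the variable $x_i$, which are irreducible, so only a rewrite at the root has to be excluded, and only a rule whose left-hand term has head $g_i$ could apply there: for part (i) this is just (2.4) with index $i$, and for part (ii) also (2.7) and (2.8) with first index $i$. But (2.4) would require the $i$-th argument of the redex to be headed by $f$, whereas ours is the variable $x_i$; and (2.7) (resp. (2.8)) would require the $j$-th argument, for some $j>i$ (resp. $j<i$), to be headed by $g_j$, whereas every argument of $g_i(\stackrel{i-1}{e},x_i,\stackrel{n-i}{e})$ other than the $i$-th is the constant $e$. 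Hence no rule applies, $g_i(\stackrel{i-1}{e},x_i,\stackrel{n-i}{e})$ is a normal form distinct from $x_i$, the critical pair fails to be joinable, and by the displayed peak (or Theorem 2.1) neither system is confluent. This yields (i) for every $n\geq1$ and (ii) for every $n>1$.

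I do not anticipate a genuine difficulty: the computation of the most general unifier is routine, and the only point that needs care is the last step, namely verifying that the additional $g$-headed rules (2.7) and (2.8) of the Lemma 2.3 system cannot act on $g_i(\stackrel{i-1}{e},x_i,\stackrel{n-i}{e})$ — which boils down to the fact that these rules demand one of the non-$i$-th arguments to be a $g$-term rather than the constant $e$.
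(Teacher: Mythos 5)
Your proof is correct and is essentially the paper's own argument: the paper uses the very same root overlap between (2.2) and (2.3) (with the roles of $\iota_1$ and $\iota_2$ swapped), producing the same critical pair $\bigl(x_i,\,g_i(\stackrel{i-1}{e},x_i,\stackrel{n-i}{e})\bigr)$ and concluding non-joinability for both systems. Your only addition is the explicit verification that $g_i(\stackrel{i-1}{e},x_i,\stackrel{n-i}{e})$ is a normal form under (2.4), (2.7), (2.8), which the paper leaves as ``obviously'' not joinable.
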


\begin{proof}
Let $\iota_1$ be identity (2.2), $\iota_2$ be identity (2.3), and $p$ be the root position of $l_1$. The most general unifier of $l_1$ and $l_2$  sends $x_j$ to $e$, for $j\neq i$, and sends $x$ to $g_i(e^{i-1},y_i,e^{n-i})$. Then diagram (2.5) takes the form
$$\xymatrix{
&f(\stackrel{i-1}{e},g_i(\stackrel{i-1}{e},x_i,\stackrel{n-i}{e}),\stackrel{n-i}{e})\ar[ld]\ar[rd]&\\
g_i(\stackrel{i-1}{e},x_i,\stackrel{n-i}{e})&&x_i}$$

\noindent We see that the terms in the critical pair that arises from this overlap is not joinable with respect to the set (2.2)-(2.4). If $n>1$, these terms are not joinable also with respect to the system of identities mentioned in the claim (ii). 
\end{proof}

Equating the terms in the critical pair arisen in the proof of Lemma 2.4,  we arrive to the identity
\begin{equation}
g_i(\stackrel{i-1}{e},x,\stackrel{n-i}{e})=x.
\end{equation}
\noindent Therefore, it is satisfied in the variety of $n$-loops, for any $n\geq 1$ and any $i$ $(1\leq i\leq n$). 

%Adding this identity, for all $i$'s, to the set (2.2)-(2.4), we still obtain a representation of this variety. This representation is confluent if and only if $n=1$. To show this, assume that

\begin{lem}
The system of identities (2-2)-(2.4), (2.9) $(1\leq i\leq n)$ is confluent if and only if $n=1$. If $n>1$, then the union of of the system of identities given in Lemma 2.4(ii) with the system of identities (2.9) ($1\leq i\leq n$) is not confluent.
\end{lem}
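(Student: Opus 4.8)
The plan is to apply Theorem 2.1 in both directions. First note that every system occurring in the statement is terminating: in each of the identities (2.2), (2.3), (2.4), (2.7), (2.8) and (2.9) the right-hand term is a single variable, which also occurs in the strictly larger left-hand term, so every rewrite step strictly decreases the size of a term. Hence Theorem 2.1 reduces confluence to the joinability of critical pairs in each case.

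For the ``if'' part, specialise to $n=1$: then $i$ can only be $1$, the flanking $e$-sequences in (2.2) and (2.9) are empty, and the system reduces to the four rules $f(x)\rightarrow x$, $f(g_1(x))\rightarrow x$, $g_1(f(x))\rightarrow x$ and $g_1(x)\rightarrow x$, among which only finitely many overlaps occur. Overlaps of an identity with a renamed copy at a root position are joinable for free, as remarked just after diagram (2.5); the remaining overlaps, at a root position or at the single inner position, yield critical pairs each of the form $(x_1,g_1(x_1))$, $(x_1,f(x_1))$, a symmetric variant of one of these, or a pair of two equal terms, and so close up in at most one step. All critical pairs being joinable, Theorem 2.1 gives confluence.

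For the remaining assertions I would exhibit one non-joinable critical pair; since it is formed solely from the identities (2.4) and (2.2), which belong both to the system (2.2)--(2.4), (2.9) and to the larger system of Lemma 2.4(ii) together with (2.9), a single computation will settle both failures of confluence. Let $n>1$, fix $i$, and choose an index $i'\neq i$ -- possible precisely because $n\geq 2$. The left-hand term of (2.4) overlaps the left-hand term of the renamed instance $f(\stackrel{i'-1}{e},y,\stackrel{n-i'}{e})=y$ of (2.2) at the position $p=(i)$, where the relevant subterm of (2.4) is $f(x_1^n)$; the most general unifier of $f(x_1^n)$ with $f(\stackrel{i'-1}{e},y,\stackrel{n-i'}{e})$ sends $x_k\mapsto e$ for $k\neq i'$ and $y\mapsto x_{i'}$. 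Forming the critical pair as in (2.5) with (2.4) as $\iota_1$, the image of the right-hand side $x_i$ of (2.4) becomes $e$ (because $i\neq i'$), while the other member becomes $g_i(u_1,\dots,u_n)$ with $u_i=u_{i'}=x_{i'}$ and $u_k=e$ for the remaining $k$; so the critical pair is $(e,\ g_i(u_1,\dots,u_n))$.

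What is left -- and this is the only step needing any care -- is to see that this critical pair is not joinable. Every proper subterm of $g_i(u_1,\dots,u_n)$ is a variable or the constant $e$, hence irreducible, while $g_i(u_1,\dots,u_n)$ itself cannot be rewritten at the root by any identity of the larger system: (2.4) would require an $f$-headed argument in position $i$, but $u_i=x_{i'}$ is a variable; (2.7) and (2.8) would require a $g$-headed argument in some position; and (2.9) would require $e$ in every position other than $i$, which fails in position $i'$ precisely because $i'\neq i$. So $g_i(u_1,\dots,u_n)$ is irreducible even in the larger system, hence also in the smaller one, and $e$ is irreducible in both; thus in each system the critical pair has two distinct normal forms and is not joinable, and Theorem 2.1 gives the failure of confluence. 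In this way the single substantive idea is the choice of an off-diagonal instance $i'\neq i$ of (2.2): it leaves a non-$e$ entry in the $g_i$-term that prevents (2.9) from collapsing it, which is exactly why the case $n=1$ must be excluded.
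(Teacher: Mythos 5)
Your proposal is correct and follows essentially the same route as the paper: it builds the same critical pair by overlapping (2.4) at position $(i)$ with an instance of (2.2) of a different index, obtaining the pair $\bigl(e,\ g_i(\dots)\bigr)$ with two variable occurrences, and shows it is not joinable, while the $n=1$ case is settled by checking the finitely many critical pairs. You merely spell out in more detail what the paper dismisses as ``obvious''/``trivial'' (irreducibility of both members of the critical pair, and the $n=1$ joinability check), which is fine.
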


\begin{proof}
For "only if" part of the claim, let $1\leq i<j\leq n$. Let $\iota_1$ be identity $(2.4)$, $p=(i)$, and $\iota_2$ be the identity
$$f(\stackrel{j-1}{e},y,\stackrel{n-j}{e})=y.$$
 The most general unifier of $l_1\vert_p$ and $l_2$ sends $x_j$ to $y$, and $x_k$ to $e$, for $k\neq j$. Therefore, diagram (2.5) takes the form
$$\xymatrix{
&g_i(\stackrel{i-1}{e},f(\stackrel{j-1}{e},y,\stackrel{n-j}{e}),\stackrel{j-i-1}{e},y,\stackrel{n-j}{e})\ar[ld]\ar[dr]&\\
e&&g_i(\stackrel{i-1}{e},y,\stackrel{j-i-1}{e},y,\stackrel{n-j}{e})}$$
The terms $e$ and $g_i(\stackrel{i-1}{e},y,\stackrel{j-i-1}{e},y,\stackrel{n-j}{e})$ obviously are not joinable with respect to any system of identities considered above. The "if" part of the claim is trivial.
\end{proof}

 In view of the proof of Lemma 2.5, consider the identity
\begin{equation}
g_i(\stackrel{i-1}{e},x,\stackrel{j-i-1}{e},x,\stackrel{n-j}{e})=e,
\end{equation}
\noindent for any $i< j$, and the identity
\begin{equation}
g_i(\stackrel{j-1}{e},x,\stackrel{i-j-1}{e},x,\stackrel{n-i}{e})=e,
\end{equation}
\noindent for any $j< i$. The arguments of the above-mentioned proof imply that these identities are satisfied in the variety of $n$-loops, for any $n\geq 2$. Similarly to Lemma 2.3, one can verify the following 
\begin{lem}
Let $n\geq 2$. The system of identities (2.2)-(2.4) with $1\leq i\leq n$, (2.7) with $1\leq i<j\leq n$, (2.8) with $1\leq j<i\leq n$, (2.9) with $1\leq i\leq n$, (2.10) with $1\leq i<j\leq n$, and (2.11) with $1\leq j<i\leq n$ is a confluent representation of the variety of $n$-loops.
\end{lem}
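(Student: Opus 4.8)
The plan is to proceed exactly as in the proofs of Lemmas 2.2--2.5: to verify that the enlarged list of identities still presents the variety of $n$-loops, that the associated term rewriting system is terminating, and that all of its critical pairs are joinable, and then to apply Theorem 2.1. The first two of these points are routine. The equational theory is unchanged when one passes from the system $(2.2)$--$(2.4)$ to the system in the statement, because each of $(2.7)$--$(2.11)$ was shown above to hold in every $n$-loop --- $(2.7)$ and $(2.8)$ (which hold already in every $n$-quasigroup) in the discussion following the proof of Lemma 2.2, $(2.9)$ in the discussion following the proof of Lemma 2.4, and $(2.10)$, $(2.11)$ in the discussion following the proof of Lemma 2.5 --- while each of them is derivable from $(2.2)$--$(2.4)$; hence the new system is again a representation of the variety of $n$-loops. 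Termination is immediate, since in every identity of the system the right-hand side is either the constant $e$ or a variable occurring in the left-hand side whereas the left-hand side is never a variable, so the right-hand side is always of strictly smaller size than the left-hand side; consequently every step $t\rightarrow t'$ strictly decreases the size of the term and no infinite chain $t_1\rightarrow t_2\rightarrow\cdots$ exists. (Equivalently, any reduction order ranking $f$ and the $g_i$ above $e$ orients the whole system.)

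By Theorem 2.1 it then remains to check that every critical pair is joinable. I would organize this by the head symbols of the left-hand sides and by the overlap position, using three observations that keep the list of genuine overlaps short. First, the only operation symbols are $f$ and the $g_i$ and no left-hand side is a variable or equals $e$; hence a root overlap can occur only between left-hand sides with the same head symbol, and an overlap at a proper position can occur only at one of the compound subterms of a left-hand side, namely at one of the subterms $g_j(x_1^n)$ occurring in $(2.3)$, $(2.7)$, $(2.8)$ or at the subterm $f(x_1^n)$ occurring in $(2.4)$. Second, of the overlaps that survive the first observation, most are excluded either by an obvious clash of head symbols or by the size argument already used in the proof of Lemma 2.3 (where it shows that the left-hand sides of $(2.4)$ and $(2.7)$ do not unify): a hypothetical most general unifier $\sigma$ would force, inside the single term $\widehat{\sigma}(l_1)$, two argument positions to carry subterms whose sizes stand in strict order in two incompatible ways, because a variable sitting at one of the positions occurs inside the subterm sitting at the other. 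Third, every critical pair coming from an identity together with a renamed copy of it is joinable for trivial reasons, the two terms of the pair either coinciding or both being the constant $e$ (for $(2.3)$ and $(2.4)$ this is the remark made just after diagram $(2.5)$).

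What remains after these reductions is a short, explicit list of overlap types, and each of them is handled by a short joining sequence. The overlap of $(2.2)$ with $(2.3)$ at the root is the one computed in the proof of Lemma 2.4, and its critical pair is joined by $(2.9)$. If $(2.2)$ or $(2.3)$ is plugged into the position of $f(x_1^n)$ in $(2.4)$, the resulting critical pair is joined either identically or by one of $(2.7)$, $(2.8)$ (in the $(2.3)$-case) or by a single application of one of $(2.9)$, $(2.10)$, $(2.11)$ (in the $(2.2)$-case), the choice depending only on how the two indices compare. If one of the head-$g_i$ rules $(2.4)$, $(2.7)$, $(2.8)$, $(2.9)$, $(2.10)$, $(2.11)$ of index $i$ is plugged into the position of $g_i(x_1^n)$ in $(2.3)$, the critical pair is joined, respectively, identically, by $(2.3)$ exactly as in the proof of Lemma 2.3, by $(2.3)$, by $(2.2)$, by $(2.2)$ and by $(2.2)$. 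Finally, plugging a head-$g_j$ rule of index $j$ into the position of $g_j(x_1^n)$ in $(2.7)$ or in $(2.8)$ yields critical pairs joined by short sequences of applications of $(2.4)$, $(2.7)$, $(2.8)$, $(2.10)$, $(2.11)$, the computations being symmetric to the previous cases. All of these joinings are entirely parallel to the diagrams displayed in the proofs of Lemmas 2.2, 2.3 and 2.4. Once joinability of every critical pair is established, Theorem 2.1 yields confluence; combined with termination, this exhibits the system as a convergent representation of the variety of $n$-loops.

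The main obstacle is not any single computation --- each of them is immediate --- but the combinatorial bookkeeping in the critical-pair step: the system has many rules, each carrying index parameters together with a side condition $i<j$ or $j<i$, so one must be careful both to enumerate every overlap and to confirm, for each resulting critical pair, that a collapse identity of the right shape (one of $(2.9)$, $(2.10)$, $(2.11)$, with the correct inequality between its indices) is actually present to join it. As in Lemma 2.3, the size argument is what keeps the enumeration finite and disposes of the overwhelming majority of the cases before any calculation.
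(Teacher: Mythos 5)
Your proposal follows the paper's own route: the paper justifies this lemma only by the remark that it can be verified ``similarly to Lemma 2.3'', i.e.\ by checking termination and joinability of all critical pairs and invoking Theorem 2.1, and that is precisely what you do (in considerably more detail than the paper supplies); your verification that the enlarged system still presents the variety of $n$-loops and your size-decrease argument for termination are also correct.

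One caveat: the ``short, explicit list of overlap types'' you claim remains after your three reductions is not quite exhaustive. Root overlaps between two \emph{distinct} rules with the same head symbol and the same head index are not all eliminated by head clashes, the occurs-check/size argument, or the renamed-copy remark, and they are absent from your list. In particular, the left-hand sides of (2.4) and (2.9) with the same $i$ unify, producing the critical pair $\bigl(x_i,\ f(\stackrel{i-1}{e},x_i,\stackrel{n-i}{e})\bigr)$, which is joined by one application of (2.2); likewise (2.2) with (2.2) of a different index, (2.9) with (2.10) or (2.11), and (2.10) with (2.11) or with another instance of (2.10) all unify, with both components of the resulting pairs collapsing to $e$. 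Since all of these extra pairs are joinable, your conclusion is unaffected, but the enumeration should include them. A smaller slip of the same kind: in your last group (a head-$g_j$ rule plugged into the $g_j$-subterm of (2.7) or (2.8)), the joining rules you list, (2.4), (2.7), (2.8), (2.10), (2.11), do not always suffice; e.g.\ plugging the instance of (2.11) with head $g_j$ and second distinguished index equal to $i$ into (2.7) with indices $i<j$ yields the pair $\bigl(x,\ g_i(\stackrel{i-1}{e},x,\stackrel{n-i}{e})\bigr)$, which needs (2.9).
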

 
 \section{Effective codescent morphisms in the varieties of $n$-quasigroups and $n$-loops}

 In \cite{SZ}, we considered the following conditions on the signature $\mathfrak{F}$ and identities of a term rewriting system $(\mathfrak{F},\Sigma)$:\vskip+2mm
 
 (*) for any identity $l=r$ from $\Sigma$, no variable occurs in $r$ more often than in $l$, and moreover, the size of $l$ is greater than the size of $r$;\vskip+2mm
  
 (**) if the set $\mathfrak{F}_0$ of constants from $\mathfrak{F}$ is not empty, then, for any non-trivial algebra $A$ from the variety determined by $\mathfrak{F}$ and $\Sigma$, the mapping $\mathfrak{F}_0\rightarrow A$ sending a constant to its value in $A$ is injective;\vskip+2mm
 
 (***) for any identity $l=r$ from $\Sigma$, any subterm $l'$ of $l$ which is neither a variable nor a constant, we have $Var(l')=Var(l)$.
\vskip+2mm

 Before continue, recall some definitions. 
 
 Let $\mathcal{V}$ be the variety of universal algebras determined by $(\mathfrak{F},\Sigma)$. Let $I$ be a non-empty set, and $(m_i:B\rightarrowtail A_i)_{i\in I}$ be a family of $\mathcal{V}$-algebras with an amalgamated subalgebra $B$.  We assume that $A_i\cap A_j=B$, for $i\neq j$ (and also that $A_i\cap \mathfrak{F}=\emptyset$).

One can introduce the binary relation $\rightsquigarrow$ on the set of $\mathfrak{F}$-terms over the set $\underset{i\in I}\cup A_i$ as follows: $t\rightsquigarrow t'$ if either the above-mentioned condition (C) is satisfied (for $X=\underset{i\in I}\cup A_i$) or $t'$ can be obtained from $t$ by replacing some subterm -- a subterm such that all its variables (being elements of algebras) belong to one and the same algebra $A_i$ -- by the value of this subterm in $A_i$.

The condition (*) implies that any element $\alpha$ of the free product $A$ of $(A_i)_i\in I$ with the amalgamated subgroup $B$ can be written as an irreducible term  over the set $\underset{i\in I}\cup A_i$, i.e., a term $\tau$ such that $\tau\rightsquigarrow \tau'$ for no term $\tau'$  over the same set. The term $\tau$ is called a \textit{normal form} of the element $\alpha$ \cite{SZ}. In general, an element $\alpha$ may have more than one normal form.\vskip+2mm

Finally, recall that a variety is said to satisfy the amalgamation property if, for any pushout 
\begin{equation}
\xymatrix{A\ar[r]^{f}\ar[d]_{g}& B\ar[d]^{g'}\\
C\ar[r]_{f'}&D}
\end{equation}
\noindent with monomorphic $f$ and $g$, the homomorphisms $f'$ and $g'$ also are such. A variety is said to satisfy the strong amalgamation property if it satisfies the amalgamation property and, moreover, for any pushout (3.1) with monomorphic $f$ and $g$, one has $f'(C)\cap g'(B)=f'g(A)$ \cite{KMPT}.

 \begin{theo}\cite{SZ}
 Let a variety $\mathcal{V}$ of universal algebras be represented by a confluent term rewriting system $(\mathfrak{F},\Sigma)$ that satisfies the conditions (*)-(***). Then 
 
 (a) every codescent morphism of $\mathcal{V}$ is effective;
 
 (b) the elements of amalgamated free products in $\mathcal{V}$ have unique normal forms;
 
 (c) the variety $\mathcal{V}$ satisfies the strong amalgamation property.
 \end{theo}
 \vskip+1mm
 From Lemmas 2.2, 2.3, 2.5, 2.6 and Theorem 3.1 we obtain
 
 \begin{theo} Let $n$ be an arbitrary natural number.
 
 (a) Every codescent morphism of $n$-quasigroups (resp. $n$-loops) is effective;
 
 (b) the elements of amalgamated free products in the variety of $n$-quasigroups (resp. $n$-loops) have unique normal forms;
 
 (c) the variety of $n$-quasigroups (resp. $n$-loops) satisfies the strong amalgamation property.
 \end{theo}
 
 Note that, for $n=2$, the claim (a) of Theorem 3.2 was first given in \cite{Z3} (see also \cite{SZ}), while the claims (b) and (c) were first given in the paper \cite{E} by Evans (see also \cite{JK} and \cite{Z3}). For $n=1$, the claim (a) is obvious since the variety of  $1$-quasigroups (being isomorphic to the category $\mathcal{S}et^{\mathbb{Z}}$ of functors from the group $\mathbb{Z}$ of integers, viewed as a category, to the category of sets) and the variety of $1$-loops are topoi. The claim (c), for $1$-loops is obvious. For $1$-quasigroups, the latter claim immediately follows from the main result of \cite{Z5}.

\vskip+1mm 
 
 Recall 
 \begin{theo} \cite{JT1} Let $\mathcal{C}$ be a category with pushouts and equalizers. A morphism p is a codescent morphism if and only if it is a couniversal regular monomorphism. i.e. a morphism whose any pushout is
a regular monomorphism.
\end{theo}

It is easy to observe that any monomorphism is regular in a variety with the strong amalgamation property.
 
 \begin{theo} \cite{Z1}
 Let a variety $\mathcal{V}$ of universal algebras satisfy the strong amalgamation property. Then a monomorphism $A\rightarrowtail A'$ of $\mathcal{V}$ is a codescent morphism if and only if it satisfies the congruence extension property, i.e., for any congruence $R$ on $A$, there is a congruence $R'$ on $A'$ such that $R'\cap (A\times A)=R$.
 \end{theo}
 
 Note that congruences mentioned in the congruence extension property in Theorem 3.4 can be congruences with respect to any representation of the variety (while normal forms mentioned in Theorem 3.1(c) are ones with respect to the considered representation $(\mathfrak{F},\Sigma)$). Therefore, Theorem 3.2(a), Theorem 3.2(c) and Theorem 3.4 imply
 \begin{theo}
 Let $n$ be an arbitrary natural number. A monomorphism $Q\rightarrowtail Q'$ of $n$-quasigroups (resp. $n$-loops) is an effective codescent morphism if and only if it satisfies the $\mathfrak{F}$-congruence extension property, where $\mathfrak{F}=\lbrace f
 \rbrace \cup \lbrace g_i\mid 1\leq i\leq n\rbrace$ (and all operation symbols are $n$-ary).
 \end{theo}
 
 Note that, for $n=2$, the set of $\mathfrak{F}$-congruences on a quasigroup $Q$ do not coincide with that of $\lbrace f\rbrace$-congruences on $Q$ \cite{JK}. The former congruences are referred to as normal congruences in literature. Not any monomorphism of quasigroups (resp. loops) satisfies the normal-congruence extension property \cite{JK}. 
 \vskip+1mm
 
 Let $n=1$. Then any monomorphism is an effective codescent morphism in both the category of $1$-quasigroups and the category of $1$-loops since they are topoi. Theorem 3.5 implies that, in these varieties, any monomorphism satisfies the $\mathfrak{F}$-congruence extension property (this can be easily seen immediately).  
 %if $A$ and $A'$ are sets with permutations, and $A\rightarrowtail A'$ is an injective mapping that is compatible with these permutations, and $\theta$ is an $\mathfrak{F}$-congruence on $A$, then the equivalence relation $\theta'=\theta\cup \lbrace(a',a')\mid a'\in A'\setminus A\rbrace$ is an $\mathfrak{F}$-congruence on $A'$).  
 Observe also that, not any $\lbrace f\rbrace$-congruence on a $1$-quasigroup is an $\mathfrak{F}$-congruence (for an example, consider a $1$-quasigroup $(\mathbb{Z},f)$ with $f(k)=k+1$, for any $k\in \mathbb{Z}$. Let $\theta$ be the equivalence relation on $(\mathbb{Z},f)$  with the following equivalence classes: $\lbrace k \rbrace$ ($k<1$), and $\mathbb{N}$. The relation $\theta$ is obviously a congruence with respect to the signature $\lbrace f\rbrace$, but is not a congruence with respect to the one  $\mathfrak{F}=\lbrace f, f^{-1}
 \rbrace$). Nevertheless, we have
 
 \begin{prop}
 Let $Q$ be a finite 1-quasigroup. Then any $\lbrace f\rbrace$-congruence on $Q$ is an $\mathfrak{F}$-congruence. 
 \end{prop}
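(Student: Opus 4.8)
The plan is to use the description recorded at the start of the paper: a $1$-quasigroup is simply a set $Q$ equipped with a permutation $f$, the operation $g_1$ being interpreted as $f^{-1}$. So an $\{f\}$-congruence on $Q$ is an equivalence relation $\theta$ with $a\,\theta\,b \Rightarrow f(a)\,\theta\,f(b)$, while an $\mathfrak{F}$-congruence is additionally required to satisfy $a\,\theta\,b \Rightarrow f^{-1}(a)\,\theta\,f^{-1}(b)$; we must show that, when $Q$ is finite, the first condition already forces the second.

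First I would observe that, since $Q$ is finite, the permutation $f$ has finite order: there is an integer $m\geq 1$ with $f^{m}=\mathrm{id}_Q$, and consequently $f^{-1}=f^{m-1}$. Then, given an $\{f\}$-congruence $\theta$, I would iterate the implication $a\,\theta\,b \Rightarrow f(a)\,\theta\,f(b)$ a total of $m-1$ times to get $a\,\theta\,b \Rightarrow f^{m-1}(a)\,\theta\,f^{m-1}(b)$, i.e.\ $a\,\theta\,b \Rightarrow f^{-1}(a)\,\theta\,f^{-1}(b)$. Hence $\theta$ is compatible with $g_1$ as well, so it is an $\mathfrak{F}$-congruence, which is exactly the claim (the reverse inclusion being trivial).

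There is essentially no obstacle here: the only substantive point is the elementary fact that a permutation of a finite set has finite order, so that $f^{-1}$ is expressible as a power of $f$. I would close by remarking that finiteness is used solely for this purpose, and that, as the example preceding the proposition shows, the conclusion genuinely fails for infinite $1$-quasigroups such as $(\mathbb{Z},f)$ with $f(k)=k+1$.
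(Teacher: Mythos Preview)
Your proof is correct and takes a genuinely different route from the paper. The paper argues at the level of $\theta$-classes: from $f(C_a)\subseteq C_{f(a)}$ it obtains a nondecreasing chain of cardinalities $|C_a|\leq |C_{f(a)}|\leq |C_{f^2(a)}|\leq\cdots$, uses finiteness to make this stabilise, and then derives a contradiction from the assumption that the inclusion is ever strict, concluding that $f(C_a)=C_{f(a)}$ for every $a$ (whence $f^{-1}$ also respects $\theta$). Your argument bypasses this class-by-class analysis entirely by invoking the single elementary fact that a permutation of a finite set has finite order, so that $f^{-1}=f^{m-1}$ is a nonnegative power of $f$ and compatibility with $\theta$ is immediate. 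Your approach is shorter and more transparent; the paper's approach has the minor side benefit of exhibiting the explicit structural fact that $f$ maps each $\theta$-class bijectively onto another $\theta$-class, but this is not needed for the proposition itself.
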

 
 \begin{proof}
 Let $\theta$ be an $\lbrace f\rbrace$-congruence on $Q$. Let $C_a$ denote the $\theta$-class of an element $a\in Q$. One obviously has 
 \begin{equation}
 f(C_a)\subseteq C_{f(a)}.
 \end{equation}
 Inequality (3.2) implies that we have the increasing chain of natural numbers:
 $$card(C_a)\leq card(C_{f(a)})\leq card(C_{f^2(a)})\leq ...$$
 Since $Q$ is finite, this chain stabilizes at some step $k\geq 0$ (assuming that $f^0=id_Q$). Inequality (3.2) implies that, for any $m\geq k$, we have 
 \begin{equation}
 f(C_{f^m(a)})=C_{f^{m+1}(a)}.
 \end{equation}
 Assume now that $k\geq 1$. Since $Q$ is finite and no distinct $\theta$-classes have non-empty intersection, (3.3) implies that there is the smallest $l\geq k$ with 
 $$ f(C_{f^l(a)})=C_{f^l(a)}.$$
 Since $l\geq 1$ and $f$ is a bijection, we arrive to the contradiction. Therefore, $k=0$, and we have 
 $$f(C_a)=C_{f(a)}.$$
 \end{proof}

% \begin{proof}
% It suffices to note that if $A$ and $A'$ are sets with permutations, $A\rightarrowtail A'$ is an injective mapping that is compatible with these permutations, and $\theta$ is an $\mathfrak{F}$-congruence on $A$, then the equivalence relation $\theta'=\theta\cup \lbrace(a',a')\mid a'\in A'\setminus A\rbrace$ is an $\mathfrak{F}$-congruence on $A'$.
% \end{proof}
\vskip+2mm
 
\textit{Author's address:}

\textit{Dali Zangurashvili, A. Razmadze Mathematical Institute of Tbilisi State University, 6 Alexidze Str., 0193, Georgia;}

\textit{e-mail: dali.zangurashvili@tsu.ge}


\begin{thebibliography}{99}
 
 


\bibitem{BN} F. Baader, T. Nipkow, Term rewriting and all that, Cambridge University Press, 2006.

\bibitem{B} V. D. Belousov, n-ary Quasigroups. Shtiintsa, Kishinev, 1972 (In Russian).

%\bibitem{B} R. H. Bruck, A survey on binary systems, Springer-Verlag, 1958.

%\bibitem{BS} V. D. Belousov, M. D. Sandik, $n$-ary quasigroups and loops, Siberian Math. Journal 7(1)(1966), 31-54 (In Russian).
\bibitem{E} T. Evans, On multiplicative systems defined by generators and relations. I. Normal form theorems. Proc. Camb. Philos. Soc. 47, 637–649 (1951).

\bibitem{JT1} G. Janelidze, W. Tholen, How algebraic is the change-of-base functor? in: Lecture Notes in Math., 1488, Springer, Berlin, 1991, 174-186.

\bibitem{JT} G. Janelidze, W. Tholen, Facets of descent, I. Appl. Categ. Structures, 2(1994), 245-281.

\bibitem{JT2} G. Janelidze, W. Tholen, Facets of descent, III. Appl. Categ. Structures, 12(5-6), 461-477(2004).


%\bibitem{J} J. Je\v{z}ek, Normal subsets of quasi-groups, Commentat. Math. Univ. Carolinae, 16(1) (1975), 77-85.

\bibitem{JK} J. Je\v{z}ek, T. Kepka, Varieties of quasigroups determined by short strictly
balanced identities, Czechoslovak Math. J. 29 (104)(1979), 84–96.

\bibitem{KMPT} E.W. Kiss, L. M\'arki, L. Pr\"{o}hle, W. Tholen, Categorical algebraic properties. A compendium on amalgamation, congruence extension, epimorphisms, residual smallness and injectivity. Stud. Sci. Math. Hung., 18(1983), 79-141. 



\bibitem{M} B. Mesablishvili, Pure morphisms of commutative rings are effective descent morphisms for modules -- a new proof, Theory Appl. Categ., 7(3)(2000), 38-42.



\bibitem{SZ} G. Samsonadze, D. Zangurashvili, Effective codescent morphisms in the varieties determined by convergent term rewriting systems, Tbilisi Math. J. 9(1)(2016), 49-64.

\bibitem{SZ1} G. Samsonadze, D. Zangurashvili, Descent in the dual category of ternary rings, 2023 https://arxiv.org/abs/2308.04747

\bibitem{Z5} D. Zangurashvili, On some categorical algebraic properties of the category of functors with the values in concrete categories, Bull. Acad. Sci. Georgian. SSR., 135(2)(1989), 2-4.


\bibitem{Z1}D. Zangurashvili, The strong amalgamation property and (effective) codescent morphisms, Theory Appl. Categ. 11(20)(2003), 438-449. 

\bibitem{Z2} D. Zangurashvili, Effective codescent morphisms, amalgamations and factorization systems, J. Pure Appl. Algebra 209(1)(2007), 255-267.

\bibitem{Z3} D. Zangurashvili, Effective codescent morphisms in some varieties of universal algebras, Appl. Categ. Structures, 22(2014), 241-252.

\bibitem{Z4} D. Zangurashvili, Admissible Galois Structures on the
categories dual to some varieties of universal
algebras, Georgian Math. Journal, 28(4)(2021), 651-664.      


\end{thebibliography}
\end{document}